\newtheorem{theorem}{Theorem}
\newtheorem{lemma}{Lemma}
\theoremstyle{definition}
\newtheorem{defn}{Definition}
\theoremstyle{remark}
\newtheorem{remark}{Remark}
\newcommand{\Rr}{\mathbb{R}}
\newcommand{\Nn}{\mathbb{N}}
\newcommand{\Zz}{\mathbb{Z}}
\begin{document}

\title[Multidimensional contracted rotations]
{Multidimensional contracted rotations}

\author[Gaiv\~ao]{Jos\'e Pedro Gaiv\~ao}
\thanks{J. P. Gaiv\~ao was partially supported by the Project CEMAPRE/REM - UIDB/05069/2020 - financed by FCT/MCTES through national funds.}
\address{ISEG/UL - Universidade de Lisboa, Department of Mathematics; REM - Research in Economics and Mathematics, CEMAPRE\\
Rua do Quelhas 6, 1200-781 Lisboa, Portugal}
\email{jpgaivao@iseg.ulisboa.pt}

\author[Pires]{Benito Pires}\thanks{
B. Pires was partially supported 
by the S\~ao Paulo Research Foundation (FAPESP), Brasil. Process Numbers 2019/10269-3,  2022/14130-2, 2024/15612-6 and 2025/14256-4}.
\address{Departamento de Computa\c c\~ao e Matem\'atica\\ Faculdade de Filosofia, Ci\^encias e Letras, Universidade de S\~ao Paulo\\ Ribeir\~ao Preto, SP, 14040-901, Brazil}
\email{benito@usp.br}

\date{\today}  

\begin{abstract}
We study the dynamics of multidimensional contracted rotations and address a problem posed by Y. Bugeaud and J-P. Conze in \textit{Acta Arithmetica} in 1999. More precisely, we show that if $A$ is an invertible linear contraction of $\mathbb{R}^d$, then the map $f: [0,1)^d\to [0,1)^d$ defined by  $f(x) = Ax +b\,\,(\textrm{mod}\,\mathbb{Z}^d)$ is asymptotically periodic for Lebesgue almost all $b\in\mathbb{R}^d$. We also include an example of a family of multidimensional contracted rotations $(d>1)$ not conjugate to the  product of one-dimensional contracted rotations $(d=1)$, showing that our result cannot be reduced to or derived from the one-dimensional result of Bugeaud and Conze.
\end{abstract}

\maketitle
\section{Introduction and Main Result}

Given $d\in\Nn$, the multidimensional contracted rotation is the map {$f\colon [0,1)^d\to[0,1)^d$} which assigns to every $x\in[0,1)^d$ the vector 
\begin{equation}\label{mapf1}
f(x):=Ax+b\pmod{\Zz^d},
\end{equation}
where $A\in\mathrm{GL}_d$ (i.e. $A$ is an invertible $d\times d$ matrix),  $\|A\|<1$ and $b\in\Rr^d$. Here $\Vert A\Vert = \sup_{x\neq 0} \Vert Ax\Vert\big/\Vert x\Vert$ and $\Vert x\Vert=\max_{1\le j\le d} \vert x_j\vert$ {for all $x=(x_j)$.
 Whenever we need to stress the dependence of $f$ on $A$ and $b$, we shall write $f_{A,b}$ to denote the multidimensional contracted rotation. 
The \textit{$\omega$-limit set} of $f$ is defined by
\begin{equation}\label{omegalimitset}
\omega(f) := \bigcup_{x\in [0,1)^d}  \omega(f,x), \quad\textrm{where}\quad
 \omega(f,x) := \bigcap_{m\ge 1} \overline{\bigcup_{n\ge m} {\left\{f^n(x)\right\}}}
\end{equation}
denotes the $\omega$-limit set of a point $x\in [0,1)^d$ under $f$.
We say that $f$ is \textit{asymptotically periodic} if $\omega(f)$ is the union of finitely many periodic orbits of $f$.

In this article, we address a problem stated by Y. Bugeaud and J-P. Conze in \cite[p. 218]{zbMATH01300448}. More precisely, we prove the following result.

\begin{theorem}\label{thm1} Let $A\in\mathrm{GL}_d$ with $\|A\|<1$. Then
 for Lebesgue almost every $b\in\mathbb{R}^d$, the map $f$ defined by \eqref{mapf1} is asymptotically periodic. 
\end{theorem}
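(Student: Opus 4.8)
The plan is to treat $f=f_b$ as an \emph{injective piecewise affine contraction} and to combine the structural theory of such maps with a parameter--exclusion argument built on two facts about $A$: it is invertible, and $I-A$ is invertible (since $\|A\|<1$ forces every eigenvalue of $A$ to lie in the open unit disc).

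\textbf{Step 1 (geometry of $f_b$).} First I would record that $[0,1)^d$ is partitioned into the cells $X_k(b)=\{x\in[0,1)^d:Ax+b\in k+[0,1)^d\}$, $k\in\Zz^d$, on each of which $f_b(x)=Ax+b-k$. Since $\diam A[0,1)^d\le\|A\|<1$, the set $A[0,1)^d+b$ meets at most $2^d$ integer translates of the unit cube, so $f_b$ has at most $2^d$ nonempty cells, each a convex polytope. Moreover $f_b$ is injective: if $f_b(x)=f_b(y)$ then $A(x-y)\in\Zz^d$ while $\|A(x-y)\|\le\|A\|\,\|x-y\|<1$, hence $A(x-y)=0$ and $x=y$. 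Finally, the interior vertices of the partition are exactly the points $v_k(b)=A^{-1}(k-b)$ that lie in $[0,1)^d$ (finitely many $k$); they depend affinely on $b$ with $\partial v_k/\partial b=-A^{-1}$, so replacing $b$ by $b+\delta$ rigidly translates the partition by $-A^{-1}\delta$.

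\textbf{Step 2 (structural reduction).} Using --- and, in dimension $d$, extending as needed --- the theory of injective piecewise contractions with finitely many convex pieces, I would argue: the attractor $\bigcap_{n\ge1}\overline{f_b^n([0,1)^d)}$ is totally disconnected (its pieces have diameter $\le\|A\|^n$), so every minimal set of $f_b$ is either a periodic orbit or a Cantor set, there are finitely many of them, and each is governed by the (finitely many) forward orbits of the one-sided iterates of the vertices $v_k(b)$; also, a periodic orbit disjoint from the discontinuity set is attracting, since its first-return map is a contraction whose linear part is a power of $A$. Hence $f_b$ fails to be asymptotically periodic only if some periodic orbit meets the discontinuity set, or some one-sided vertex has an \emph{infinite} $\omega$-limit set. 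The first alternative is negligible: for a prescribed periodic itinerary the periodic point $x^\ast(b)$ satisfies $\partial x^\ast/\partial b=(I-A)^{-1}$, and likewise $\partial_b f_b^n(v_k(b))=\sum_{\ell=0}^{n-2}A^\ell$ on each region where the relevant itinerary is locally constant, both invertible; so for each itinerary and each discontinuity hyperplane $\{(Ax+b)_j\in\Zz\}$ the equation putting $x^\ast(b)$, resp. $f_b^n(v_k(b))$ with $n\ge1$, on that hyperplane defines a hypersurface in $b$-space, and the union of these countably many hypersurfaces is a null set.

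\textbf{Step 3 (main obstacle: vertices with infinite $\omega$-limit).} It remains to prove that $B^\ast:=\{b\in\Rr^d:\text{some one-sided vertex of }f_b\text{ has an infinite }\omega\text{-limit set}\}$ has Lebesgue measure zero, and this is the heart of the proof. The starting observation is that, on any region of parameters where the itinerary of $v_k(b)$ up to time $n$ is constant, the displacement $\Phi_n(b):=f_b^n(v_k(b))-v_k(b)$ is \emph{affine with linear part the fixed matrix} $L_n:=A^{-1}(I-A^n)(I-A)^{-1}$, which is invertible with $\|L_n^{-1}\|$ bounded uniformly in $n$; only the translation term of $\Phi_n$ depends on the itinerary. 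Thus a near-return $\|\Phi_n(b)\|<\varepsilon$ with the points $v_k(b),\dots,f_b^n(v_k(b))$ pairwise distinct confines $b$, within each itinerary cell, to a set of measure $\asymp\varepsilon^d$. The obstruction to a naive Borel--Cantelli summation over $n$ is that the number of itinerary cells meeting a fixed ball can grow with $n$. To overcome it I would pass to a first-return (renormalization) map of $f_b$ to a well-chosen union of cells, show it is again an injective piecewise contraction, and prove a \emph{slow-recurrence} estimate: each genuine return of a vertex close to itself forces the renormalization depth to increase while the admissible parameter set shrinks by a definite factor, so that the parameters sustaining arbitrarily deep renormalization --- precisely those in $B^\ast$ --- form a null set, just as the irrational rotation numbers do in the one-dimensional theory of contracted rotations. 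Implementing this renormalization in dimension $d$, where the order structure of the interval is unavailable and return domains are genuine polytopes, is the step I expect to be the real work; the convexity of the cells and the rigidity from Step~1 are the tools I would lean on to keep the combinatorics finite. Once $\Leb(B^\ast)=0$ is established, for every $b$ outside the countable union of null sets from Steps~2 and~3 all minimal sets of $f_b$ are periodic orbits avoiding the discontinuities, so $\omega(f_b)$ is a finite union of periodic orbits and $f_b$ is asymptotically periodic.
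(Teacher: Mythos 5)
There is a genuine gap, and it sits exactly where you say you expect ``the real work'' to be: Step~3 is a research programme, not a proof. Your Steps~1 and~2 reproduce, correctly, the elementary part of the paper (at most $2^d$ convex cells, injectivity of $f_b$, affine and invertible dependence of the partition and of candidate periodic points on $b$ via $(I-A)^{-1}$ and $(I-A^{n})(I-A)^{-1}$ --- the paper's Lemmas on $e_b$, $E_b(p)$, $\chi$ and the change of variables/parameters do the same bookkeeping). But the entire difficulty of the theorem is concentrated in showing that the set of $b$ for which some discontinuity orbit has an infinite $\omega$-limit set is Lebesgue-null. You correctly identify that the naive Borel--Cantelli count fails because the number of itinerary cells can grow like $2^{dn}$ while the near-return constraint only gains $\varepsilon^{d}\sim\|A\|^{nd}$, and you propose to repair this by a multidimensional renormalization/first-return scheme --- which you then explicitly decline to construct. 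No such renormalization is known in dimension $d>1$; the paper does not build one either. Instead it reduces the family $f_b$, by an affine change of variables and parameters, to the hyperplane-parametrized piecewise affine contractions $G_\mu$ of Gaiv\~ao--Pires (2024) and invokes their Theorem~2.2 (asymptotic periodicity on regular points for a.e.\ $\mu$), whose proof is precisely the quantitative parameter-exclusion your Step~3 would need. Without either that external theorem or a completed renormalization argument, your proof does not close.

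A secondary, smaller gap: your dichotomy in Step~2 (``failure forces a periodic orbit on the discontinuity set or a vertex with infinite $\omega$-limit'') does not account for non-periodic orbits that remain on the discontinuity set for all time; since $f_b$ is defined everywhere on $[0,1)^d$, such orbits are not automatically controlled by vertex orbits or by your periodic-itinerary hypersurface count. The paper handles exactly this case separately (its Theorem~\ref{thm:Gmu2}): if an orbit never becomes regular it must hit one fixed hyperplane $H_j(\mu)$ infinitely often, a multiplicity bound forces two pulled-back hyperplanes $(\varphi^{\alpha})^{-1}(H_j(\mu))=(\varphi^{\beta})^{-1}(H_j(\mu))$ to coincide, and that coincidence pins $\mu_j$ to a countable union of hyperplanes determined by eigenvalues of powers of $A$ --- a null set. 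Something of this kind (or an argument that such orbits cannot carry new $\omega$-limit points) would have to be added to your Step~2 even if Step~3 were completed.
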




{Theorem \ref{thm1} is a statement about the set $$
\mathcal{E}_A:=\{b\in [0,1)^d\colon f_{A,b}\text{ is not asymptotically periodic}\}
$$
called the exceptional set of the family ${f_{A,b},\,b\in [0,1)^d}$. It states that $\mathrm{Leb}\big(\mathcal{E}_A\big)=0$, where $\mathrm{Leb}$ denotes the $d$-dimensional Lebesgue measure.
 
In the one-dimensional case ($d=1$), substantially stronger results are known, see for instance \cite{B93, B04, zbMATH01300448, MR1861988, BKLN21, Arnaud, LN18}. More precisely, circle contracted rotations $f_{\lambda,b}:x\in [0,1)\mapsto \lambda x+b\pmod{1}$ have strictly increasing lifts which can be used to define a rotation number $\rho(f_{\lambda,b})\in[0,1)$,  varying continuously in $b$,  characterizing the average asymptotic rotation of every orbit of $f_{\lambda,b}$. A circle contracted rotation $f_{\lambda,b}$ is asymptotically periodic if and only if the rotation number is a rational number. Hence, the exceptional set of circle contracted rotations is 
$$
\mathcal{E}_{\lambda} := \{b\in[0,1)\colon \rho(f_{\lambda,b})\notin \mathbb{Q}\}.
$$
From the continuity of $b\mapsto \rho(f_{\lambda,b})$ and the fact that $\rho(f_{\lambda,0})=0$ and \linebreak $\lim_{b\to 1^-} \rho(f_{\lambda,b})= 1$,  we know that the exceptional set $\mathcal{E}_{\lambda}$ is uncountable.  Moreover, it can be shown that $\mathcal{E}_{\lambda}$ has zero Lebesgue measure, which is an easy consequence of the explicit decomposition (see \cite[Theorem 1]{MR1861988}):
$$
\mathcal{E}_{\lambda} = [0,1)\bigg\backslash \bigcup_{1\leq p<q,  (p,q)=1} \left[\frac{1-\lambda}{1-\lambda^q}S(\lambda,p/q), \frac{1-\lambda}{1-\lambda^q}(S(\lambda,p/q) + \lambda^{q-1}-\lambda^q)\right],
$$
where $S(\lambda,p/q)= 1+ \sum_{k=1}^{q-2}([(k+1)p/q]-[kp/q])\lambda^k$. In fact, $\mathcal{E}_{\lambda}$ has zero Hausdorff dimension \cite[Theorem 5]{LN18}. See Figure \ref{fig2}, in the Appendix~\ref{appendix figure} at the end of this article, illustrating the relation between  the rotation number of $f_{\lambda,b}$ and the parameters $\lambda$ and $b$.  

As an immediate consequence of the theory of circle contracted rotations, it follows that whenever $A=\operatorname{diag}(\lambda_1,\ldots,\lambda_d)$
is a contracting diagonal matrix with $\lambda_1,\ldots,\lambda_d\in(0,1)$, the exceptional set $\mathcal E_A$ is uncountable and has Hausdorff dimension $d-1$ (in particular has zero Lebesgue measure). Indeed, in this case we have $f_{A,b}=\prod_{i=1}^d f_{\lambda_i,b_i}$, i.e., $f_{A,b}$ is the product of  circle contracted rotations $f_{\lambda_i,b_i}:x\in [0,1)\mapsto \lambda_i x+b_i\pmod{1}$. Therefore,  $\mathcal{E}_A =\bigcup_{i=1}^d p_i^{-1}\big(\mathcal{E}_{\lambda_i}\big)$,  where $\mathcal{E}_{\lambda_i}$ is the exceptional set of the circle contracted rotation $f_{\lambda_i,b_i}$ and $p_i$ is the projection $(u_1,\ldots,u_d)\mapsto u_i$.

It is unclear whether the rotation number approach can be extended to higher dimensions so as to provide a similar description of the exceptional set $\mathcal E_A$ as in the one-dimensional case. Beyond the special case of diagonal matrices, the following result shows that, in the non-diagonalizable case, the study of $\mathcal E_A$ cannot be reduced to one-dimensional dynamics.

\begin{theorem}\label{theoremtwo} There exist a non-diagonalizable matrix $A\in\mathrm{GL}_2$ with $\|A\|<1$ and a Lebesgue full set $U\subset\mathbb{R}^2$ such that for each $b\in U$,  the map $f=f_{A,b}$ defined by \eqref{mapf1} is not $C^1$ conjugate to the  product of one-dimensional contracted rotations. 
\end{theorem}

Apart from the particular cases mentioned above, to the best of our knowledge, no general results on multidimensional contracted rotations were known prior to this work. Progress on this problem has only become possible through recent advances in the theory of multidimensional piecewise contractions \cite{GaivaoPires2024,JAIN_LIVERANI_2025}. Although several results on multidimensional piecewise contractions are available (see \cite{BD08,CGMU16,zbMATH04182365}), they do not apply to the parametrized families arising in multidimensional contracted rotations.

A key difficulty in the proof of Theorem \ref{thm1} is that, when the $b$-family associated with \eqref{mapf1} is rewritten as a family of maps on $[0,1)^d$, the parameter $b$ appears not only in the definition of the map but also in its domains of continuity. Consequently, the resulting family falls outside the scope of the existing theory. Another difficulty is that Theorem \ref{thm1} requires asymptotic periodicity on the whole domain, whereas available results establish such conclusions only on the set of regular points. This stronger statement is provided by Theorem \ref{thm:Gmu2}.

\section{Proofs}

\subsection{Preliminaries}
Throughout this section, fix $A={(a_{j\ell})}\in\mathrm{GL}_d$ with   {$\|A\|=\max_j\sum_{\ell} |a_{j\ell}|<1$}. We will consider $b\in\mathbb{R}^d$ as a parameter. 
In this way, the map $f$ in \eqref{mapf1}  becomes the \mbox{$b$-parameter} family of maps
$f_b: [0,1)^d \to [0,1)^d$, $b\in\mathbb{R}^d$, defined by 
\begin{equation}\label{mapf2}
f_b(x):=Ax+b\pmod{\Zz^d}.
\end{equation}
Denoting {by} $\lfloor\cdot\rfloor$ the component-wise floor function and {by} $\{x\}=x-\lfloor x\rfloor$, $x\in\Rr^d$, the component-wise fractional part, then \eqref{mapf2} reads
\begin{equation}\label{f}
f_b(x)=\{Ax+b\}=Ax+b-\lfloor Ax+b\rfloor,\quad \forall x\in[0,1)^d.
\end{equation}

\begin{figure}[ht]
    \centering
    \begin{minipage}{0.49\textwidth}
        \centering
        \includegraphics[width=\linewidth]{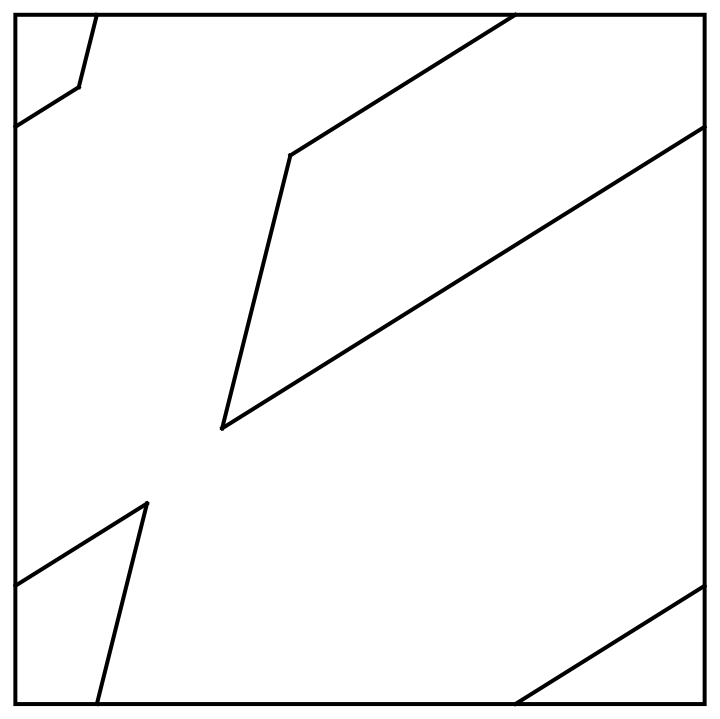}
    \end{minipage}\hfill
    \begin{minipage}{0.49\textwidth}
        \centering
        \includegraphics[width=\linewidth]{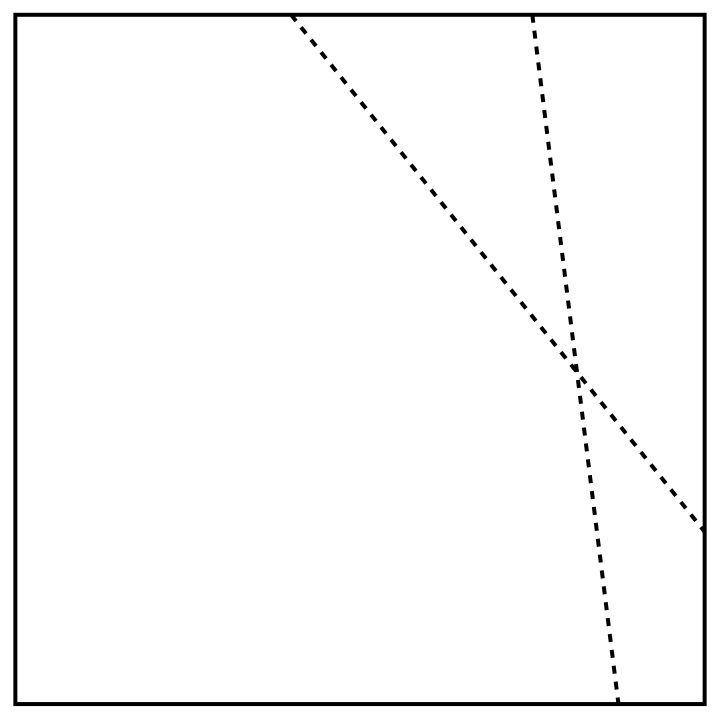}
    \end{minipage}
    \caption{Example taking $A=\begin{pmatrix}0.8 & 0.1\\ 0.5 & 0.4\end{pmatrix}$ and $b={\begin{pmatrix}0.3\\0.4\end{pmatrix}}$. The left-hand side shows the image of $f_b$, while the right-hand side shows its discontinuity set.}
    \label{fig}
\end{figure}



The \textit{code} of $f_b$ is the function $e_b\colon [0,1)^d\to\Zz^d$ defined by
\begin{equation}\label{e}
e_b(x):=\lfloor Ax+b\rfloor, \quad \forall\,x\in[0,1)^d.
\end{equation}

The following properties are elementary to check. 
\begin{lemma}\label{lem:elementary} For every $b\in\Rr^d$,
\begin{enumerate}
\item $f_{b+p}=f_b$ for every $p\in\mathbb{Z}^d$,
\item $f_b$ is injective,
\item $e_{\{b\}}([0,1)^d) \subset \{-1,0,1\}^d$.
\end{enumerate}
\end{lemma}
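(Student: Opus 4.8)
I would verify the three assertions directly from the formulas \eqref{f} and \eqref{e}, using only two ingredients: the componentwise identity $\lfloor y+p\rfloor=\lfloor y\rfloor+p$ for $y\in\Rr^d$ and $p\in\Zz^d$, and the bound $\|Ax\|\le\|A\|\,\|x\|$ together with the standing hypothesis $\|A\|<1$. No machinery beyond this is needed.

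\textbf{Step 1 (item (1)).} Fix $p\in\Zz^d$. For every $x\in[0,1)^d$ I would write $\lfloor Ax+b+p\rfloor=\lfloor Ax+b\rfloor+p$, so that $e_{b+p}(x)=e_b(x)+p$ and hence $f_{b+p}(x)=Ax+b+p-\lfloor Ax+b+p\rfloor=Ax+b-\lfloor Ax+b\rfloor=f_b(x)$. In particular this reduces item (3) to the case $b=\{b\}\in[0,1)^d$.

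\textbf{Step 2 (item (2)).} Suppose $f_b(x)=f_b(y)$ with $x,y\in[0,1)^d$. Subtracting the two instances of \eqref{f} gives $A(x-y)=e_b(x)-e_b(y)\in\Zz^d$. Since $x,y\in[0,1)^d$, each coordinate of $x-y$ lies in $(-1,1)$, so $\|x-y\|<1$, and therefore $\|A(x-y)\|\le\|A\|\,\|x-y\|<1$. The only element of $\Zz^d$ with sup-norm strictly less than $1$ is the zero vector, so $A(x-y)=0$, and invertibility of $A$ forces $x=y$.

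\textbf{Step 3 (item (3)).} By Step 1 take $b\in[0,1)^d$. For $x\in[0,1)^d$ I would bound each coordinate: $|(Ax)_j|\le\|Ax\|\le\|A\|\,\|x\|<1$, so $(Ax)_j\in(-1,1)$; adding $b_j\in[0,1)$ gives $(Ax+b)_j\in(-1,2)$, whence $\lfloor(Ax+b)_j\rfloor\in\{-1,0,1\}$. Thus $e_b(x)=\lfloor Ax+b\rfloor\in\{-1,0,1\}^d$, as claimed.

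\textbf{Main obstacle.} There is essentially none: the three claims are bookkeeping with floors and the operator norm, which is why they are stated as elementary. The only point deserving attention is to keep track of where the hypotheses are used — it is the strict contraction $\|A\|<1$ combined with the \emph{half-open} cube $[0,1)^d$ (so that $\|x-y\|<1$ in Step 2 and $(Ax+b)_j<2$ in Step 3) that simultaneously makes $f_b$ injective and confines the code to $\{-1,0,1\}^d$.
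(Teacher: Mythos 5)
Your proposal is correct and follows essentially the same argument as the paper: item (1) via the floor identity, item (2) by observing $A(x-y)=e_b(x)-e_b(y)\in\Zz^d$ has sup-norm less than $1$ hence vanishes and then invoking invertibility of $A$, and item (3) by the bound $(Ax+\{b\})_j\in(-1,2)$. The only cosmetic difference is that the paper concludes injectivity by first deducing $e_b(x)=e_b(y)$ and then $x=y$, which is equivalent to your route through $A(x-y)=0$.
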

\begin{proof}
We only give a proof of (2) and (3) as (1) is obvious. 
Let $x,y\in [0,1)^d$ {be} such that $f(x)=f(y)$. By \eqref{f}, $Ax+b-e_b(x)=Ay+b-e_b(y)$, which gives $A(y-x)=e_b(y)-e_b(x)\in\Zz^d$. But,
$
\|e_b(y)-e_b(x)\| = \|A(y-x)\|\leq\|A\|\|y-x\|<1,
$ for every $x,y\in[0,1)^d$. Thus, $e_b(y)=e_b(x)$, which gives $x=y$. This shows (2). Regarding (3), since $\|Ax\|<1$ for every $x\in[0,1)^d$, we have $Ax+\{b\}\in(-1,2)^d$. Thus, $\lfloor Ax + \{b\}\rfloor\subset \{-1,0,1\}^d$, which shows (3).
\end{proof}

{Set}
$$
E_b(p):=e_b^{-1}(p) = \{x\in[0,1)^d\colon e_b(x)=p\}, \quad {p \in\Zz^d}.
$$
Clearly, these sets are disjoint and {for each $p=(p_j)\in\mathbb{Z}^d$,}
$$
E_b(p)=\bigcap_{{j=1}}^d E^{{(j)}}_b(p),\quad E^{{(j)}}_b(p):=\{x\in[0,1)^d\colon {p_j}\leq (Ax+b)_{{j}}<{p_j}+1\}.
$$
Whenever $E_b(p)\neq\emptyset$, we have $f(x) = Ax+b-p$ for every $x\in E_p(b)$. The regions $E_b(p)$, ${p\in\Zz^d},$ are called the \textit{domains of continuity} of $f_b$.

Notice that, by (3) of Lemma~\ref{lem:elementary}, $E_{\{b\}}(p)=\emptyset$ for all $p\notin\{-1,0,1\}^d$. In particular, this shows that $\#\{p\in\mathbb{Z}^d\colon E_b(p)\neq \emptyset\}\leq 3^d$. However, as we shall see in the following lemma, the number of non-empty regions $E_b(p)$ is much smaller. For instance, in the example of Figure~\ref{fig}, $E_b(p)\neq\emptyset$ iff  $p\in\{0,1\}^2$.   

Let $\chi\colon \Rr^d\to \mathbb{Z}^d$ be the function
\begin{equation}\label{eq:qb}
\chi(b) := \min_{x\in[0,1)^d} e_b(x) = \min_{x\in[0,1)^d} \lfloor Ax+b\rfloor,
\end{equation}
where the minimum is taken component-wise. If $A$ has non-negative entries, then $\chi(b)=\lfloor b\rfloor$.
\begin{lemma}\label{lem:chi}
\hfill
\begin{enumerate}
\item $\chi(b)=\chi(\{b\})+\lfloor b\rfloor$ for every $b\in\Rr^d$,
\item $\chi(b)\in\{-1,0\}^d$ for every $b\in[0,1)^d$,
\item $[0,1)^d = \bigcup_{p\in\{0,1\}^d} E_b(\chi(b)+p)$ for every $b\in\Rr^d$,
\item For every $b\in\Rr^d$, $p\in\{0,1\}^d$ and ${j\in\{1,\ldots,n\}}$,
$$
E_b^{{(j)}}(\chi(b)+p)=
\begin{cases}
\{x\in[0,1)^d\colon  (Ax+b)_{{j}}<(\chi(b))_{{j}}+1\},& {\textrm{if}}\quad p_j=0\\
\{x\in[0,1)^d\colon  (Ax+b)_{{j}}\geq (\chi(b))_{{j}}+1\},& {\textrm{if}}\quad p_j=1
\end{cases}
$$
\end{enumerate}
\end{lemma}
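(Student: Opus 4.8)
The plan is to establish the four items in turn, each reducing to the single structural fact that, since $\|A\|<1$, the set $A([0,1)^d)$ has diameter strictly less than one in every coordinate: for all $x,y\in[0,1)^d$ and every index $j$ one has $|(Ax)_j-(Ay)_j|=|(A(x-y))_j|\le\|A\|\,\|x-y\|<1$, and in particular $|(Ax)_j|<1$. I would first observe that $\chi$ is well defined, because for fixed $b$ the function $x\mapsto\lfloor Ax+b\rfloor$ on $[0,1)^d$ takes only finitely many (integer) values, so its component-wise minimum is attained.

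For item (1), write $b=\{b\}+\lfloor b\rfloor$; since $\lfloor b\rfloor\in\Zz^d$, $\lfloor Ax+b\rfloor=\lfloor Ax+\{b\}\rfloor+\lfloor b\rfloor$, i.e.\ $e_b=e_{\{b\}}+\lfloor b\rfloor$ pointwise, and taking the component-wise minimum over $x\in[0,1)^d$ yields $\chi(b)=\chi(\{b\})+\lfloor b\rfloor$. For item (2), fix $b\in[0,1)^d$ and $j$: evaluating the code at $x=0$ gives $e_b(0)_j=\lfloor b_j\rfloor=0$, so $\chi(b)_j\le 0$, while $(Ax+b)_j\in(-1,2)$ for every $x$ forces $\lfloor(Ax+b)_j\rfloor\ge-1$, hence $\chi(b)_j\ge-1$; therefore $\chi(b)_j\in\{-1,0\}$.

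For item (3), recall that the sets $E_b(p)$, $p\in\Zz^d$, are pairwise disjoint and cover $[0,1)^d$; thus it suffices to prove that $e_b(x)\in\chi(b)+\{0,1\}^d$ for every $x\in[0,1)^d$. The inequality $e_b(x)_j\ge\chi(b)_j$ is immediate from the definition of $\chi$. For the reverse bound, choose $y\in[0,1)^d$ attaining the minimum in the $j$-th coordinate, so $e_b(y)_j=\chi(b)_j$; since $(Ax+b)_j-(Ay+b)_j=(A(x-y))_j<1$, the floors of these two reals differ by at most one, giving $e_b(x)_j\le\chi(b)_j+1$, which is what we wanted.

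For item (4), fix $b$, $p\in\{0,1\}^d$ and an index $j$. By definition $E_b^{(j)}(\chi(b)+p)=\{x\in[0,1)^d\colon(\chi(b)+p)_j\le(Ax+b)_j<(\chi(b)+p)_j+1\}$, so one simply has to argue that one of the two inequalities is redundant. If $p_j=0$, the lower inequality $(Ax+b)_j\ge\chi(b)_j$ holds automatically because $(Ax+b)_j\ge\lfloor(Ax+b)_j\rfloor=e_b(x)_j\ge\chi(b)_j$; if $p_j=1$, the upper inequality $(Ax+b)_j<\chi(b)_j+2$ holds automatically because $(Ax+b)_j<e_b(x)_j+1\le\chi(b)_j+2$ using the upper bound from item (3). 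This gives the two cases in the statement. Overall the proof is routine; the only place that calls for slight care is the passage from a sub-unit difference of real numbers to a difference of at most one of their floors in item (3), a bound which is then reused in item (4) to discard the redundant inequality.
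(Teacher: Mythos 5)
Your proof is correct and follows essentially the same route as the paper: everything rests on the observation that $|(Ax+b)_j-(Ay+b)_j|\le\|A\|\,\|x-y\|<1$ for $x,y\in[0,1)^d$, which forces $\chi(b)_j\le e_b(x)_j\le\chi(b)_j+1$ and hence (3) and (4). The paper declares (1) and (2) obvious and phrases the key step as ``the set $\{(Ax+b)_j\colon x\in[0,1)^d\}$ contains at most one integer,'' but this is the same bound you obtain via the floors differing by at most one.
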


\begin{proof}Properties (1) and (2) are obvious. Let us prove (3) and (4).
For every $x,y\in[0,1]^d$, $|(Ax+b)_{{j}}-(Ay+b)_{{j}}|\leq \|A\|\|x-y\|\leq \|A\|<1$. This implies that, for every $b\in\Rr^d$, the set $\{(Ax+b)_{{j}}\colon x\in[0,1)^d \}$ contains at most one integer. Thus,  by \eqref{eq:qb}, we conclude that $(\chi(b))_{{j}} \leq (Ax+b)_{{j}} < (\chi(b))_{{j}} +2$ for every $x\in[0,1)^d$. This shows (4) and also shows (3) since, for every $x\in[0,1)^d$, we have $x\in E_b(\chi(b)+p)$ for some $p\in\{0,1\}^d$.
\end{proof}

\begin{remark}
Lemma~\ref{lem:chi} shows that the discontinuities of $f_b$ partition $[0,1)^d$ into at most $2^d$ regions, i.e. $\#\{p\in\mathbb{Z}^d\colon E_b(p)\neq \emptyset\}\leq 2^d$. Notice that, we are not claiming that all $E_b(\chi(b)+p)$ are non-empty. In fact, it is easy to construct examples of $A$ and $b$ where $f_b$ has a single domain of continuity.
\end{remark}

\subsection{Change of variables}\label{covar}

In order to prove Theorem~\ref{thm1}, it is convenient to change variables using the linear translation $h_b\colon \Rr^d\to\Rr^d$ defined by
\begin{equation}\label{hby} 
h_b(y) := y + (I-A)^{-1}b,\quad y\in\mathbb{R}^d.
\end{equation}

Let $D_b := h_b^{-1}([0,1)^d)$ denote the translated {unit hypercube}. The map $f_b$ in the new variables $y=h_b^{-1}(x)$ is $g_b\colon D_b\to D_b$ defined by $g_b:=h_b^{-1}\circ f_b\circ h_b$. Obviously, $f_b$ is asymptotically periodic iff $g_b$ is {as well}.
\begin{lemma}\label{lem:changevar}
$g_b(y) = Ay - e_b(h_b(y))$ for every $y\in D_b$.
\end{lemma}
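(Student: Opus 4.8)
The plan is a direct computation unwinding the definitions, so I will state it as such. Fix $y\in D_b$ and set $x:=h_b(y)=y+(I-A)^{-1}b$; note that $I-A$ is invertible because $\|A\|<1$, so $h_b$ and its inverse $h_b^{-1}(z)=z-(I-A)^{-1}b$ are well defined, and $x\in[0,1)^d$ precisely by the definition $D_b=h_b^{-1}([0,1)^d)$, which is what makes $e_b(x)$ meaningful. First I would use \eqref{f} to write $f_b(x)=Ax+b-\lfloor Ax+b\rfloor = Ax+b-e_b(x)$, and note that $e_b(x)=e_b(h_b(y))$ by the choice of $x$.

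Then, by the definition $g_b=h_b^{-1}\circ f_b\circ h_b$, we have $g_b(y)=h_b^{-1}(f_b(x))=f_b(x)-(I-A)^{-1}b = Ax+b-e_b(h_b(y))-(I-A)^{-1}b$. Substituting $x=y+(I-A)^{-1}b$ and collecting the terms not involving $y$ yields
\[
g_b(y)=Ay-e_b(h_b(y))+\big(A(I-A)^{-1}+I-(I-A)^{-1}\big)b .
\]
The last step is to check that the coefficient of $b$ vanishes: since $A-I=-(I-A)$ one has $A(I-A)^{-1}-(I-A)^{-1}=(A-I)(I-A)^{-1}=-I$, so $A(I-A)^{-1}+I-(I-A)^{-1}=0$. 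This gives $g_b(y)=Ay-e_b(h_b(y))$ for all $y\in D_b$, as claimed.

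There is essentially no obstacle here: the only points needing (minimal) attention are the invertibility of $I-A$ (guaranteed by $\|A\|<1$) and the membership $h_b(y)\in[0,1)^d$ for $y\in D_b$, both already built into the setup of Section~\ref{covar}. As a byproduct, the identity also makes transparent that $g_b$ maps $D_b$ into $D_b$, since $f_b([0,1)^d)\subset[0,1)^d$ and $h_b^{-1}$ is a bijection $[0,1)^d\to D_b$.
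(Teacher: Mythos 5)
Your proof is correct and follows essentially the same route as the paper: unwind $g_b=h_b^{-1}\circ f_b\circ h_b$, substitute $x=h_b(y)$, and check that the coefficient $A(I-A)^{-1}+I-(I-A)^{-1}$ of $b$ vanishes (the paper writes this as $(A+(I-A)-I)(I-A)^{-1}=0$). No issues.
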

\begin{proof}
Clearly, for each $y\in D_b$, 
\begin{align*}
g_b(y)&=f_b(h_b(y))-(I-A)^{-1}b\\
&= Ah_b(y)+b-\lfloor A h_b(y)+b\rfloor - (I-A)^{-1}b\\
&=Ay+A(I-A)^{-1}b+b-(I-A)^{-1}b - \lfloor A h_b(y)+b\rfloor\\
&=Ay - \lfloor A h_b(y)+b\rfloor,
\end{align*}
since $A(I-A)^{-1}b+b-(I-A)^{-1}b=(A+(I-A)-I)(I-A)^{-1}b=0$.
\end{proof}
The domains of continuity of $g_b$ are 
\begin{equation}\label{eq:Cbp}
C_b^p:=h_b^{-1}(E_b(\chi(b)+p)),\quad p\in\{0,1\}^d.
\end{equation}
Whenever $C_b^p\neq\emptyset$, we have $g_b(y)=Ay-(\chi(b)+p)$ for every $y\in C_b^p$.
\subsection{Change of parameters}
 In order to describe the domains $C_b^p$ as in the setup of {\cite{GaivaoPires2024}}, we introduce the following change of parameters.
\begin{defn}
Let $\rho\colon \Rr^d\to \Rr^d$ be the map $b\mapsto \mu=(\mu_1,\ldots,\mu_d)$ where
$$
\mu_{{j}} = \frac{(\chi(b)-(I-A)^{-1}b)_{{j}} +1}{\|A^{({{j}})}\|},\quad {1\le j\le d},
$$
where $A^{{(j)}}$ denotes the ${j}$-th row of $A$.
\end{defn}

Given $\mu\in\Rr^d$, consider the following family of hyperplanes,
\begin{equation}\label{H} 
H_{{j}}(\mu) := \left\{ x\in\mathbb{R}^d: \langle v^{({j})}, x\rangle = \mu_{{j}}\right\},\quad 1\leq {j} \leq d
\end{equation}
where $\langle\cdot,\cdot\rangle$ denotes the Euclidean inner product and $v^{{(j)}} := \frac{A^{{(j)}}}{\|A^{{(j)}}\|}$ are unit vectors. Associated to this family of hyperplanes we have 
the \textit{label map} $\sigma_\mu\colon \Rr^d\to\mathcal{A}$ defined by $\sigma_\mu(x)=(s_1(x),\ldots,s_d(x))$ where
$$
s_{{j}}(x)=\begin{cases}+1,& \text{if } \langle v^{{(j)}},x\rangle < \mu_{{j}}\\
-1,& \text{if } \langle v^{{(j)}},x\rangle \geq \mu_{{j}}
\end{cases},
$$
and $\mathcal{A}=\{-1,+1\}^d$.
For simplicity, set $(-1)^p = ((-1)^{p_1},\ldots,(-1)^{p_d})\in\mathcal{A}$ for any $p={(p_j)}\in\{0,1\}^d$.
\begin{lemma}\label{lem:Cbp}For every $b\in\Rr^d$ and $p\in\{0,1\}^d$,
$$C_b^p = D_b\cap \sigma^{-1}_{\rho(b)}((-1)^p).$$
\end{lemma}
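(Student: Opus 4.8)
The plan is to establish the set identity by unwinding the definitions of $C_b^p$, $D_b$, $\rho$ and the label map $\sigma_\mu$, thereby reducing the statement to a coordinatewise comparison of the inequalities that define $E_b^{(j)}(\chi(b)+p)$ in Lemma~\ref{lem:chi}(4). Fix $b\in\Rr^d$ and $p\in\{0,1\}^d$. By \eqref{eq:Cbp} we have $y\in C_b^p$ if and only if $h_b(y)\in E_b(\chi(b)+p)=\bigcap_{j=1}^d E_b^{(j)}(\chi(b)+p)$; since each $E_b^{(j)}(\chi(b)+p)$ is by definition a subset of $[0,1)^d$, this already forces $h_b(y)\in[0,1)^d$, i.e. $y\in D_b$. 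So it remains to show that, for $y\in D_b$ and each $j$, one has $h_b(y)\in E_b^{(j)}(\chi(b)+p)$ if and only if $s_j(y)=(-1)^{p_j}$, where $\sigma_{\rho(b)}(y)=(s_1(y),\dots,s_d(y))$.

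The key computation is to express $Ah_b(y)+b$ in terms of $y$. From \eqref{hby}, $Ah_b(y)+b = Ay + A(I-A)^{-1}b + b = Ay + (I-A)^{-1}b$, using the identity $A(I-A)^{-1}+I = (I-A)^{-1}$ already employed in the proof of Lemma~\ref{lem:changevar}. Substituting this into the description of $E_b^{(j)}(\chi(b)+p)$ from Lemma~\ref{lem:chi}(4), the condition $h_b(y)\in E_b^{(j)}(\chi(b)+p)$ reads $(Ay)_j < \bigl(\chi(b)-(I-A)^{-1}b\bigr)_j + 1$ when $p_j=0$, and $(Ay)_j \geq \bigl(\chi(b)-(I-A)^{-1}b\bigr)_j + 1$ when $p_j=1$. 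Writing $(Ay)_j = \langle A^{(j)},y\rangle = \|A^{(j)}\|\,\langle v^{(j)},y\rangle$ and dividing through by $\|A^{(j)}\|$ — which is legitimate and direction-preserving because $A\in\mathrm{GL}_d$ has no zero row, so $\|A^{(j)}\|>0$ — these become $\langle v^{(j)},y\rangle < \mu_j$ when $p_j=0$ and $\langle v^{(j)},y\rangle \geq \mu_j$ when $p_j=1$, where $\mu_j=(\rho(b))_j$. By the definition of $\sigma_\mu$ these say exactly $s_j(y)=+1=(-1)^{p_j}$ and $s_j(y)=-1=(-1)^{p_j}$, respectively.

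Putting the coordinates together, for $y\in D_b$ we obtain $h_b(y)\in E_b(\chi(b)+p)$ if and only if $\sigma_{\rho(b)}(y)=(-1)^p$, which combined with the reduction in the first paragraph yields $C_b^p = D_b\cap\sigma^{-1}_{\rho(b)}((-1)^p)$. There is essentially no obstacle here: the argument is a bookkeeping exercise, and the only point deserving a moment's attention is the positivity of $\|A^{(j)}\|$, guaranteed by the invertibility of $A$, which is precisely what makes the change of parameters $\rho$ well defined and the division step valid.
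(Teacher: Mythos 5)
Your proof is correct and follows essentially the same route as the paper's: both unwind the definitions of $C_b^p$, $h_b$ and $\rho$, apply part (4) of Lemma~\ref{lem:chi} coordinatewise, use the identity $A(I-A)^{-1}+I=(I-A)^{-1}$, and divide by $\|A^{(j)}\|>0$ to match the defining inequality of $s_j$. The only cosmetic difference is that you phrase it as an equivalence of membership conditions while the paper writes a chain of set equalities.
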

\begin{proof}
Suppose that ${p_j}=0$ for some ${1\le j\le d}$. By \eqref{eq:Cbp} and by (4) of Lemma~\ref{lem:chi} we get,
\begin{align*}
h_b^{-1}(E_b^{{(j)}}(\chi(b)+p))&=\{h_b^{-1}(x)\colon x\in[0,1)^d,\, (Ax+b)_{{j}}<(\chi(b))_{{j}}+1\}\\
&= \{y\in D_b\colon (Ah_b(y)+b)_{{j}}<(\chi(b))_{{j}}+1\}\\
&= \{y\in D_b\colon (Ay+(A(I-A)^{-1}+I)b)_{{j}}<(\chi(b))_{{j}}+1\}\\
&= \{y\in D_b\colon (Ay+(I-A)^{-1}b)_{{j}}<(\chi(b))_{{j}}+1\}\\
&= \{y\in D_b\colon \langle A^{{(j)}},y\rangle <(\chi(b)-(I-A)^{-1}b)_{{j}}+1\}\\
&= \{y\in D_b\colon \langle v^{{(j)}},y\rangle <\mu_{{j}}\}\\
&= D_b\cap s_{{j}}^{-1}((-1)^{p_{{j}}}).
\end{align*}
Similarly, for $p_{{j}}=1$, we get $h_b^{-1}(E_b^{{(j)}}(\chi(b)+p)) = D_b\cap s_{{j}}^{-1}((-1)^{p_{{j}}})$.

\end{proof}

The next step is to extend $g_b$ to a piecewise-affine contraction map defined on the whole $\Rr^d$. The extension is the obvious one. 

\subsection{Extension map $G_\mu$.}

Throughout this subsection we fix $k\in \Zz^d$. Recall that $\mathcal{A}=\{-1,+1\}^d$.

Let $\{\varphi_{{i}}\colon \Rr^d\to\Rr^d\}_{{i\in\mathcal{A}}}$ be the family of affine contractions $$\varphi_{{i}}(y)=Ay-(k+p), \quad{\forall y\in\mathbb{R}^d},$$ where {$p\in\{0,1\}^d$ such that} $(-1)^p={i}$.

Define
$$
r:=\frac{(1+\|A\|)(1+\|k\|)}{(1-\|A\|)^2}.
$$
\begin{lemma}\label{lem:invariantball} $\|y\|\leq 2r \Longrightarrow \|\varphi_{{i}}(y)\|< 2r$ {for all $i\in\mathcal{A}$}.
\end{lemma}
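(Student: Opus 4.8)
The plan is to verify the implication directly by estimating $\|\varphi_i(y)\|$ using the triangle inequality and the submultiplicativity of the sup norm. Since $\varphi_i(y) = Ay - (k+p)$ with $p\in\{0,1\}^d$, we have
$$
\|\varphi_i(y)\| \leq \|A\|\,\|y\| + \|k+p\| \leq \|A\|\,\|y\| + \|k\| + 1,
$$
using $\|p\|\leq 1$ for $p\in\{0,1\}^d$. Plugging in the hypothesis $\|y\|\leq 2r$, it suffices to show $2\|A\|\,r + \|k\| + 1 < 2r$, i.e. that $2r(1-\|A\|) > 1+\|k\|$.

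Now I would simply substitute the definition of $r$. Since $r = \frac{(1+\|A\|)(1+\|k\|)}{(1-\|A\|)^2}$, we get
$$
2r(1-\|A\|) = \frac{2(1+\|A\|)(1+\|k\|)}{1-\|A\|}.
$$
Because $\|A\|<1$, we have $1-\|A\| < 1 \leq 1+\|A\|$, and also $2 > 1$, so $\frac{2(1+\|A\|)}{1-\|A\|} > 1$; multiplying by $1+\|k\| \geq 1 > 0$ preserves the strict inequality, giving $2r(1-\|A\|) > 1+\|k\|$ as required. This closes the argument.

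There is essentially no obstacle here — the lemma is a routine bookkeeping estimate whose only content is that the radius $2r$ has been chosen large enough to absorb the additive translation term while the contraction factor $\|A\|<1$ does the rest. The one point worth being slightly careful about is keeping track of the strictness of the final inequality (so that the conclusion $\|\varphi_i(y)\| < 2r$ is strict, as stated), which is guaranteed since $1+\|k\|\geq 1$ and $\frac{2(1+\|A\|)}{1-\|A\|} > 1$ strictly. One could also note in passing that the bound is uniform over $i\in\mathcal{A}$, since the estimate never used anything about $i$ beyond $\|p\|\leq 1$.
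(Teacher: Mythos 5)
Your proof is correct. It is essentially the same elementary norm estimate as the paper's, differing only in the decomposition: the paper writes $\varphi_i(y)=\varphi_i(y)-\varphi_i(z_i)+z_i$ with $z_i=-(I-A)^{-1}(k+p)$ the fixed point of $\varphi_i$, bounds $\|z_i\|\le\frac{1+\|k\|}{1-\|A\|}=(1-\|A\|)r/(1+\|A\|)\cdot(1+\|A\|)$ and lands on the slightly sharper conclusion $\|\varphi_i(y)\|\le(1+\|A\|)r<2r$, whereas you apply the triangle inequality to $Ay-(k+p)$ directly and reduce to the arithmetic fact $2r(1-\|A\|)>1+\|k\|$. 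The two routes are interchangeable here; your direct bound is marginally more self-contained (no fixed point needed), the paper's makes transparent why $r$ is defined in terms of the fixed points' norms. One microscopic point: your step $1-\|A\|<1$ uses $\|A\|>0$, which holds since $A\in\mathrm{GL}_d$, and in any case the factor $2$ already gives $\frac{2(1+\|A\|)}{1-\|A\|}\ge 2>1$.
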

\begin{proof}
Each $\varphi_{{i}}$ has a unique fixed point $z_{{i}} = -(I-A)^{-1}(k+p)$. Notice that $\|z_{{i}}\|\leq \frac{1+\|k\|}{1-\|A\|}$ for all ${i}\in\mathcal{A}$. Then, given $\|y\|\leq {2r}$, we have
\begin{align*}
\|\varphi_{{i}}(y)\| &= \|\varphi_{{i}}(y)- \varphi_{{i}}(z_{{i}}) + z_{{i}}\|\\
&\leq\|A(y-z_{{i}})\| + \|z_{{i}}\|\\
&\leq 2\|A\| r + (1+\|A\|)\|z_{{i}}\|\\
&\leq 2\|A\| r + \frac{(1+\|A\|)(1+\|k\|)}{1-\|A\|}\\
&= 2\|A\| r + (1-\|A\|)r\\
&= (1+\|A\|)r\\
&< 2r.
\end{align*}
\end{proof}
Let 
$$
X=\{y\in\Rr^d\colon \|y\|\leq 2 r\}.
$$
By Lemma~\ref{lem:invariantball}, $\varphi_{{i}}(X)\subset \mathrm{int}(X)$. Therefore, 
given $\mu\in\Rr^d$, we can define $G_\mu\colon X\to X$ by 
$$
G_\mu(y)=\varphi_{\sigma_\mu(y)}(y),\quad y\in \Rr^d.
$$
The map $G_\mu$ is a piecewise-affine contraction as defined in \cite[Equation (2.9)]{GaivaoPires2024}}. We will now recall a result from \cite{GaivaoPires2024}, but first have to introduce the {notions} of regular point {and asymptotic periodicity on a forward invariant set}. 
\begin{defn}[{regular point}]\label{def:regular} {Given $\mu\in\mathbb{R}^d$,}
a point $y\in X$ is called \textit{a regular point of} $G_\mu$ if 
$$
G_\mu^n(y) \in \bigcup_{\alpha\in\mathcal{A}}\mathrm{int}(\sigma_\mu^{-1}(\alpha)),\quad \forall\,n\geq0.
$$
The set of regular points of $G_\mu$ is denoted by $Z_\mu$.
\end{defn}
{It is an elementary  fact that $Z_\mu$ is forward invariant by $G_\mu$.}
{
\begin{defn}[asymptotic periodicity on a set] Let $\mu\in\mathbb{R}^d$ and
 $Z\subset X$ be a set forward invariant by $G_\mu$. We say that $G_\mu$ is asymptotically periodic on $Z$ if $\bigcup_{z\in Z} \omega(G_\mu,z)$ is the union of finitely many periodic orbits of $G_\mu$. 
\end{defn}
}
{
Notice that if $G_{\mu}$ is asymptotically periodic on $Z=X$, then $G_{\mu}$ is called asymptotically periodic, as it has been defined in the Introduction.}

\begin{theorem}[Theorem 2.12 of \cite{GaivaoPires2024}]\label{thm:Gmu}
The map $G_\mu$ is asymptotically periodic on $Z_\mu$ for Lebesgue almost every $\mu\in\Rr^d$.
\end{theorem}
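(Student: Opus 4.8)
I would prove Theorem~\ref{thm:Gmu} by coding orbits of $G_\mu$ symbolically and then showing that the set of parameters $\mu$ for which some regular orbit fails to converge to a periodic orbit has Lebesgue measure zero. The symbolic part is soft and uniform in $\mu$; the measure-theoretic part, which exploits the rigidity of the family (all the $\varphi_\alpha$ share the same linear part $A$, and each coordinate $\mu_j$ translates the hyperplane $H_j(\mu)$ transversally along its fixed normal $v^{(j)}$), is where essentially all the work sits.

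\textbf{Coding; why eventually periodic itineraries suffice.} Fix $\mu$ and, for $y\in Z_\mu$, record its \emph{itinerary} $\iota_\mu(y)=\big(\sigma_\mu(G_\mu^n(y))\big)_{n\ge 0}$, a sequence over $\mathcal{A}$. For a finite word $w=(\alpha_0,\dots,\alpha_{q-1})\in\mathcal{A}^q$ put $\varphi_w:=\varphi_{\alpha_{q-1}}\circ\cdots\circ\varphi_{\alpha_0}$, an affine contraction of ratio $\le\|A\|^q$ which, crucially, does \emph{not} depend on $\mu$; on the open set of points whose itinerary begins with $w$ one has $G_\mu^q=\varphi_w$. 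The first, elementary, step is to observe that if $\iota_\mu(y)$ is eventually periodic --- periodic with period word $w$ from some time $n_0$ on --- then the points $G_\mu^{n_0+kq}(y)$, $k\ge0$, form a $\varphi_w$-orbit and hence converge to the unique fixed point $z_w$ of $\varphi_w$; if the finite set $\{z_w,G_\mu z_w,\dots,G_\mu^{q-1}z_w\}$ is disjoint from the discontinuity set $D_\mu:=X\cap\bigcup_{j=1}^d H_j(\mu)$, continuity of $G_\mu$ away from $D_\mu$ then gives $\omega(G_\mu,y)=\{z_w,\dots,G_\mu^{q-1}z_w\}$, a genuine periodic orbit. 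Thus it is enough to exhibit a null set $N\subset\Rr^d$ so that, for $\mu\notin N$: (i) every $y\in Z_\mu$ has an eventually periodic itinerary; (ii) only finitely many itineraries occur among points of $Z_\mu$; and (iii) the relevant fixed points $z_w$ and their finite $G_\mu$-orbits avoid $D_\mu$.

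\textbf{A dichotomy away from the discontinuities.} Let $\Delta_\mu$ denote the set of accumulation points of the forward orbit $\bigcup_{n\ge 0}G_\mu^n(D_\mu)$ of the discontinuity set. I would establish the standard piecewise-contraction dichotomy: for $y\in Z_\mu$, if $\omega(G_\mu,y)$ is disjoint from $\Delta_\mu\cup D_\mu$, then $\omega(G_\mu,y)$ is a single periodic orbit. The reason is that on a compact neighbourhood of such an $\omega$-limit the map $G_\mu$ agrees locally with one of the affine $\|A\|$-contractions $\varphi_\alpha$, so the orbit eventually revisits a single atom at pairs of points that are $\|A\|^q$-close for every $q$, which forces the itinerary to be periodic and puts us in the situation of the previous paragraph. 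Consequently (i) and (ii) will both follow once I know that, for a.e.\ $\mu$, the forward orbit of $D_\mu$ is a union of $(d-1)$-dimensional pieces of geometrically decreasing diameter that accumulates on at most finitely many points, each of them lying on a periodic orbit of $G_\mu$.

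\textbf{Excluding the bad parameters (the main obstacle).} Everything is now reduced to a statement about how the translated hyperplanes $H_j(\mu)$ and their iterated, $\mu$-independent images $\varphi_w(H_i(\mu))$ can overlap. For each finite word $w$, each pair $(i,j)$ of hyperplane indices, and each finite combinatorial pattern describing a way in which $\varphi_w(H_i(\mu)\cap X)$ can meet $H_j(\mu)$ so as to feed a fresh discontinuity piece back onto the discontinuity set, the corresponding locus of admissible $\mu$ is cut out by finitely many affine conditions on $\mu$ of positive codimension --- because the normals $v^{(j)}$ are fixed and only the offsets $\mu_j$ vary, so each such incidence is transversal. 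I would take $N$ to be the union of this countable family of null sets together with the countably many hyperplanes $\{\langle v^{(j)},z_w\rangle=\mu_j\}$ and their iterated analogues that ensure (iii); a Fubini-type slicing of $\mu$-space over these patterns, combined with the uniform contraction rate $\|A\|<1$ (which guarantees that at any given scale only finitely many patterns are active), then shows that for $\mu\notin N$ the iterated discontinuity pieces cannot pile up on an infinite set, so by the dichotomy above $G_\mu$ is asymptotically periodic on $Z_\mu$. The delicate point --- the heart of the whole argument --- is to pin down \emph{precisely} which finite patterns must be forbidden and to verify that forbidding them on a null set of parameters genuinely rules out an infinite accumulation of discontinuity iterates; I expect this bookkeeping, rather than any single estimate, to be the real obstacle.
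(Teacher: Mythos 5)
First, a point of order: the paper does not prove this statement at all --- it is quoted verbatim as Theorem 2.2 of \cite{GaivaoPires2024} and used as a black box, so there is no internal proof to compare yours against. Judged on its own merits, your proposal is a reasonable outline of the standard strategy for piecewise contractions (symbolic coding, the dichotomy that an $\omega$-limit set avoiding the closure of the orbit of the discontinuity set must be a periodic orbit, and exclusion of a null set of bad parameters), but it is an outline rather than a proof: the entire content of the theorem sits in the step you defer as ``bookkeeping'', namely showing that the set of $\mu$ for which the iterated discontinuity pieces accumulate on an infinite set is Lebesgue null.

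Moreover, the mechanism you propose for that step does not work as stated. You claim that each finite pattern by which $\varphi_w(H_i(\mu)\cap X)$ ``meets'' $H_j(\mu)$ is cut out by affine conditions on $\mu$ of positive codimension because the normals are fixed and only the offsets vary. But two affine hyperplanes in $\Rr^d$ with non-parallel (fixed) normals intersect in a codimension-two affine subspace for \emph{every} choice of offsets, and this intersection meets a fixed bounded region for an \emph{open} set of offsets; so mere incidence is an open condition on $\mu$, not a null one, and your countable union $N$ would either miss the bad parameters or swallow a set of positive measure. The conditions that genuinely have positive codimension are exact coincidences of hyperplanes, such as $(\varphi^{\beta})^{-1}(H_j(\mu))=(\varphi^{\alpha})^{-1}(H_j(\mu))$ --- exactly the kind of identity exploited in the proof of Theorem~\ref{thm:Gmu2} in this paper via the sets $N_{\alpha,\beta}$ --- and the hard part of \cite{GaivaoPires2024} is a quantitative control of the arrangement $\mathcal{H}_\mu^{(n)}$ (a uniform bound on the multiplicity of these hyperplane families, cf.\ the bound \eqref{eq:bound} imported from their Lemma 5.8, together with a count of the atoms of the iterated partition). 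None of that is present in your sketch, so the central step remains a genuine gap.
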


We generalise the arguments used in the proof of  \cite[Corollary 2.13]{GaivaoPires2024} to obtain the following stronger result.

\begin{theorem}\label{thm:Gmu2}
The map $G_\mu$ is asymptotically periodic for Lebesgue almost every $\mu\in\Rr^d$.
\end{theorem}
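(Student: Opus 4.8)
The plan is to upgrade Theorem~\ref{thm:Gmu} from ``asymptotic periodicity on the set of regular points $Z_\mu$'' to ``asymptotic periodicity on all of $X$'' by controlling the (Lebesgue-)negligible set of non-regular orbits. The key observation is that the non-regular set is, for almost every $\mu$, itself asymptotically periodic, because points can only be ``trapped'' on the discontinuity hyperplanes $H_j(\mu)$, and these hyperplanes move in a non-degenerate way as $\mu$ varies.

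\textbf{Step 1 (structure of the non-regular set).} A point $y\in X$ fails to be regular iff some forward iterate $G_\mu^n(y)$ lands on the union of hyperplanes $\bigcup_{j} H_j(\mu)$. Since each branch $\varphi_i$ is an invertible affine contraction, the preimage under a finite word $\varphi_{i_0}^{-1}\circ\cdots\circ\varphi_{i_{n-1}}^{-1}$ of a hyperplane $H_j(\mu)$ is again a hyperplane (an affine subspace of dimension $d-1$). Hence $X\setminus Z_\mu$ is contained in a countable union of $(d-1)$-dimensional affine subspaces, so it has zero Lebesgue measure; more importantly, $\omega(G_\mu,y)$ for $y\notin Z_\mu$ is determined by where the orbit eventually falls after it first hits a hyperplane.

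\textbf{Step 2 (reduce to finitely many ``post-collision'' dynamics and apply Theorem~\ref{thm:Gmu} on them).} The idea used for \cite[Corollary 2.3]{GaivaoPires2024}, which we generalise, is a Fubini/slicing argument: write $\mu=(\mu_1,\dots,\mu_d)$ and, fixing a hyperplane index $j$ and a finite admissible word $w$, consider the affine image $\varphi_w(H_j(\mu))$. One shows that for a.e.\ $\mu$ this image meets $\bigcup_\ell H_\ell(\mu)$ only in a set of codimension $\geq 1$ within the hyperplane, by differentiating in the parameter $\mu_\ell$ and using that the normal vectors $v^{(\ell)}$ of distinct hyperplanes are (generically) linearly independent together with $\|A\|<1$, so the maps translating the hyperplanes act transversally. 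Iterating, one concludes that for a.e.\ $\mu$ every orbit, after its first collision with a hyperplane, either becomes regular or lands on a lower-dimensional ``corner'' set; descending the dimension $d\to d-1\to\cdots\to 0$ by induction, the bad orbits are eventually forced onto a finite set, which, being forward invariant under a contraction, lies in finitely many periodic orbits. Combining this with Theorem~\ref{thm:Gmu} applied to $Z_\mu$ gives that $\bigcup_{y\in X}\omega(G_\mu,y)$ is a finite union of periodic orbits, i.e.\ $G_\mu$ is asymptotically periodic.

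\textbf{Step 3 (assemble).} Intersect the countably many full-measure parameter sets obtained above (one for each index $j$, each finite word $w$, and each level of the induction) to get a single full-measure set of $\mu$ for which $G_\mu$ is asymptotically periodic on all of $X$. This is exactly the statement of Theorem~\ref{thm:Gmu2}.

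The main obstacle I expect is Step~2: making precise the transversality statement ``$\varphi_w(H_j(\mu))$ meets $\bigcup_\ell H_\ell(\mu)$ in a lower-dimensional set for a.e.\ $\mu$'' uniformly over all infinitely many words $w$, and organising the induction on dimension so that the exceptional $\mu$-sets at each stage remain null. The countability of the collection of words $w$ and of hyperplane indices makes the union of exceptional sets still null, but one must check that the parameter dependence $\mu\mapsto$ (hyperplane position) is genuinely non-degenerate after composing with the contractions $\varphi_w$ — this uses invertibility of $A$ together with the explicit form $H_j(\mu)=\{\langle v^{(j)},x\rangle=\mu_j\}$, so that varying $\mu_j$ translates $H_j$ at unit speed transversally to its own normal, and the images $\varphi_w(H_j(\mu))$ depend affinely and non-trivially on $\mu_j$.
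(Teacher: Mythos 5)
Your Step 1 and the overall reduction are on the right track: the paper also begins from Theorem~\ref{thm:Gmu} and reduces the problem to showing that, for a.e.\ $\mu$, every orbit eventually enters $Z_\mu$, i.e.\ that no orbit can stay on $\bigcup_j H_j(\mu)$ forever. But Step 2 is where the proof has to happen, and as written it has a genuine gap. Your transversality/dimension-descent argument counts dimensions of \emph{sets} of points (``$\varphi_w(H_j(\mu))$ meets $\bigcup_\ell H_\ell(\mu)$ in a lower-dimensional set''), which at best shows that the set of forever-non-regular initial conditions is small. That is not what is needed: asymptotic periodicity is a statement about \emph{every} point of $X$, including points in a null set, and a single non-regular orbit is perfectly entitled to live inside all of those lower-dimensional intersections. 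Nothing in your argument prevents an orbit from returning to the same hyperplane $H_j(\mu)$ infinitely often without ever descending to a ``corner,'' so the induction $d\to d-1\to\cdots\to 0$ never gets started for such an orbit; and even where it does, applying an ``a.e.\ parameter'' result to the induced dynamics on a hyperplane is illegitimate, because the positions of the induced discontinuities are determined by $\mu$ rather than being free parameters.

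The two ingredients that actually close the argument, and which are absent from your proposal, are: (i) the uniform multiplicity bound \eqref{eq:bound} from \cite[Lemma 5.8]{GaivaoPires2024}, which, combined with the pigeon-hole principle applied to a forever-non-regular orbit, forces two distinct pullbacks $(\varphi^{\alpha})^{-1}(H_j(\mu))$ and $(\varphi^{\beta})^{-1}(H_j(\mu))$ to \emph{coincide} as hyperplanes; and (ii) the observation that such a coincidence forces $(A^{n-m})^t v^{(j)}=\lambda v^{(j)}$ for some real eigenvalue $\lambda$ of $A^{n-m}$ together with an explicit affine resonance relation on $\mu_j$, which defines countably many null sets $N_{\alpha,\beta}$ that can be removed from the parameter space in advance. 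Your remark that the hyperplanes ``move non-degenerately with $\mu$'' is the germ of (ii), but without (i) you have no way to pass from ``the orbit meets hyperplanes at every step'' to a single algebraic identity in $\mu$ that can be excluded on a null set. I would encourage you to look for a mechanism that turns infinite recurrence to a fixed hyperplane into a finite coincidence; that is the step your sketch is missing.
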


\begin{proof}
By Theorem~\ref{thm:Gmu}, there is a full Lebesgue measure set $W\subset\Rr^d$ such that the map $G_\mu$ is asymptotically periodic on $Z_\mu$ for every $\mu\in W$. So, {since $Z_\mu$ is forward invariant by $G_\mu$}, it remains to show that for every $y\in X$, there is $n\geq0$ such that $G_\mu^n(y)\in Z_\mu$. To prove {this}, we need to introduce some more terminology.

Given $\alpha= (i_0, i_1,\ldots, i_{n-1})\in\mathcal{A}^n$, $n\ge 1$,  the map $\varphi^{\alpha}:=\varphi_{i_{n-1}}\circ\cdots\circ\varphi_{i_0}$ satisfies
$$ \varphi^{\alpha}(y) = A^{n} y + r_{\alpha},\quad\forall y\in\mathbb{R}^d,
$$
where $r_\alpha =\varphi^\alpha(0)$. Given $\mu\in\Rr^d$ and $n\geq1$, define the collection of hyperplanes
$$
\mathcal{H}_\mu^{(n)}:=\{(\varphi^\alpha)^{-1}(H_j(\mu))\colon \alpha\in\mathcal{A}^k,\,0\leq k<n,\,1\leq j\leq d\},
$$
where $\varphi^0=\mathrm{id}$ is the identity map and $\mathcal{A}^0 = \{0\}$. By \cite[Lemma 5.8]{GaivaoPires2024},  
\begin{equation}\label{eq:bound}
\forall\,\mu\in W,\quad \sup_{n\geq1}\max_{x\in\Rr^d}\#\{H\in\mathcal{H}_\mu^{(n)}\colon x\in H\}<+\infty.
\end{equation}
Let
$$ \Lambda = \bigcup_{n\in\mathbb{N}^*} \{\lambda\in\mathbb{R}: \lambda \textrm{  is {a real} eingenvalue of $A^{n}$}\}.
$$
Because $\Vert A\Vert<1$, we have that
$\Lambda \subset (-1,1)$.
Given integers $1\le m<n$, $\alpha\in\mathcal{A}^{m}$ and $\beta\in\mathcal{A}^{n},$ 
let
$$
N_{\alpha,\beta} = \left\{\mu\in\Rr^d\colon \exists\,j\in\{1,\ldots,d\},\, \exists\,\lambda\in\Lambda \mid \mu_j = \frac{1}{1-\lambda}\langle r_\alpha-\lambda r_\beta,v^{(j)}\rangle \right\},
$$
where $N_{\alpha,\beta}=\emptyset$ if $A$ has no real eigenvalues. The set $N_{\alpha,\beta}$ is a union of finitely many hyperplanes. Thus, $N_{\alpha,\beta}$ has zero Lebesgue measure. 
Set 
$$
W_1 = W{\bigg\backslash}\left( {\bigcup_{m\geq1}\bigcup_{n>m}\bigcup_{\alpha\in\mathcal{A}^m} 
\bigcup_{\beta\in\mathcal{A}^n}
N_{\alpha,\beta}}\right).$$
Then, $W_1$ has full Lebesgue measure.  

Now, the rest of the proof goes by contradiction.  Let $\mu\in W_1$ and suppose that there is  $y\in X$ such that $G_\mu^n(y)\notin Z_\mu$ for all $n\geq0$.  Since $G_\mu(X)\subset \mathrm{int}(X)$,  the orbit ${\{G^n_\mu(y)\}}_{n\geq0}$ is contained in the union of hyperplanes $\bigcup_{j=1}^d H_j(\mu)$. By the pigeon-hole principle, there must exist $j\in\{1,\ldots,d\}$ such that ${\{G^n_\mu(y)\}}_{n\geq0}$ intersects the hyperplane $H_j(\mu)$ infinitely many times, i.e.,  $G^n_{\mu}(y)\in H_j(\mu)$ for infinitely many $n\geq1$.  Equivalently,  there exist infinitely many $n\ge 1$ such that $y\in (\varphi^{\alpha})^{-1}(H_j(\mu))$ for some $\alpha\in\mathcal{A}^n$.  By \eqref{eq:bound},  there exist positive integers $m<n$, $\alpha\in\mathcal{A}^{m}$ and $\beta\in\mathcal{A}^{n}$ such that $y$ belongs to the coincident hyperplanes 
\begin{equation}\label{151}
(\varphi^{\beta})^{-1}(H_j(\mu)) = (\varphi^{\alpha})^{-1}(H_j(\mu)).
\end{equation}
In terms of vectors, we have that for some $\lambda\neq 0$,
$$
(A^{n})^t v^{(j)} = \lambda (A^{m})^t v^{(j)}.
$$
Since $A$ is invertible, we have that 
$ (A^{n-m})^t v^{(j)} = \lambda v^{(j)}$, thus $\lambda\in \Lambda$.
Moreover,  because $y$ belongs to both hyperplanes in \eqref{151},  we have that
\begin{align*}
\mu_j &= \langle A^n y+r_\beta,v^{(j)}\rangle\\
&= \langle A^n y,v^{(j)}\rangle +\langle r_\beta,v^{(j)}\rangle\\
&= \langle y,(A^n)^t v^{(j)}\rangle +\langle r_\beta,v^{(j)}\rangle\\
&= \langle y,\lambda (A^{m})^t v^{(j)}\rangle +\langle r_\beta,v^{(j)}\rangle\\
&= \lambda \langle A^{m} y,  v^{(j)}\rangle +\langle r_\beta,v^{(j)}\rangle\\
&= \lambda (\mu_j-\langle r_\alpha, v^{(j)}\rangle) +\langle r_\beta,v^{(j)}\rangle,
\end{align*}
which gives $(1-\lambda)\mu_j= \langle r_\beta - \lambda r_\alpha,v^{(j)}\rangle$. Thus,  $\mu \in N_{\alpha,\beta}$,  yielding a contraction.  
\end{proof}

\subsection{Proof of Theorem~\ref{thm1}}

By (1) of Lemma~\ref{lem:elementary}, we may suppose that $b\in[0,1)^d$. 
By (2) of Lemma~\ref{lem:chi}, the function $\chi$ is piecewise constant in $[0,1)^d$ taking values in $\{-1,0\}^d$. Therefore, given any $k\in\{-1,0\}^d$, let $U_k\subset \Rr^d$ be the pre-image of $k$ under $\chi$, i.e. $U_k:= \chi^{-1}(k)$. In order to prove Theorem~\ref{thm1}, since $[0,1)^d=\bigcup_{k\in\{-1,0\}^d} U_k$, it is enough to show that $f_b$ is asymptotically periodic for Lebesgue almost every $b\in U_k$. Changing variables, we just need to show that  $g_b$ is asymptotically periodic for Lebesgue almost every $b\in U_k$ (see {Section \ref{covar}}).

Now, we show that $G_\mu$ is an extension of $g_b$.
\begin{lemma}\label{lem:extension}
Let $b\in[0,1)^d$. Then $D_b\subset X$ and $G_\mu|_{D_b} = g_b$ provided $k=\chi(b)$ and $\mu=\rho(b)$.
\end{lemma}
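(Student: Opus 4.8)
The plan is to verify the two assertions separately, both by direct computation using the formulas already established. For the inclusion $D_b\subset X$: recall $D_b=h_b^{-1}([0,1)^d)$, so a point $y\in D_b$ has the form $y=x-(I-A)^{-1}b$ with $x\in[0,1)^d$. Then $\|y\|\le \|x\|+\|(I-A)^{-1}\|\,\|b\|$, and since $x\in[0,1)^d$ gives $\|x\|<1$, while $b\in[0,1)^d$ gives $\|b\|<1$, and $\|(I-A)^{-1}\|\le (1-\|A\|)^{-1}$ by the Neumann series bound, we get $\|y\|<1+(1-\|A\|)^{-1}$. It then remains to check that this quantity is $\le 2r$, i.e. that $1+\frac{1}{1-\|A\|}\le \frac{2(1+\|A\|)(1+\|k\|)}{(1-\|A\|)^2}$; since $k=\chi(b)\in\{-1,0\}^d$ forces $\|k\|\le 1$, the right-hand side is at least $\frac{2(1+\|A\|)}{(1-\|A\|)^2}$, and a short estimate (using $0<\|A\|<1$, so $1-\|A\|<1$ and $1+\|A\|\ge 1$) shows $1+\frac{1}{1-\|A\|}=\frac{2-\|A\|}{1-\|A\|}\le \frac{2}{(1-\|A\|)^2}\le\frac{2(1+\|A\|)}{(1-\|A\|)^2}$. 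Hence $D_b\subset X$.

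For the identity $G_\mu|_{D_b}=g_b$: fix $y\in D_b$. By Lemma~\ref{lem:Cbp}, $y$ lies in exactly one $C_b^p=D_b\cap\sigma_{\rho(b)}^{-1}((-1)^p)$ for some $p\in\{0,1\}^d$ (these cover $D_b$ since the $E_b(\chi(b)+p)$, $p\in\{0,1\}^d$, cover $[0,1)^d$ by (3) of Lemma~\ref{lem:chi}). With $\mu=\rho(b)$, this means $\sigma_\mu(y)=(-1)^p=:i$, so by definition $G_\mu(y)=\varphi_i(y)=Ay-(k+p)=Ay-(\chi(b)+p)$, using $k=\chi(b)$. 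On the other hand, $y\in C_b^p$ means $h_b(y)\in E_b(\chi(b)+p)$, so $e_b(h_b(y))=\chi(b)+p$, and Lemma~\ref{lem:changevar} gives $g_b(y)=Ay-e_b(h_b(y))=Ay-(\chi(b)+p)$. The two expressions agree, so $G_\mu(y)=g_b(y)$.

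I expect the only mild obstacle to be the arithmetic inequality $1+(1-\|A\|)^{-1}\le 2r$ needed for $D_b\subset X$; everything else is bookkeeping that chains together Lemmas~\ref{lem:changevar}, \ref{lem:Cbp}, and \ref{lem:chi} with the definitions of $\varphi_i$, $\sigma_\mu$, $G_\mu$, $\chi$, and $\rho$. One should be slightly careful that the partition of $D_b$ into the $C_b^p$ is indexed by $p\in\{0,1\}^d$ while the label map outputs elements of $\mathcal{A}=\{-1,+1\}^d$, but the correspondence $p\mapsto(-1)^p$ is exactly the bijection built into the definition of $\varphi_i$, so the match is automatic.
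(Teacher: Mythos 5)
Your proposal is correct and follows essentially the same route as the paper: the same norm bound $\|y\|\le 1+\frac{\|b\|}{1-\|A\|}=\frac{2-\|A\|}{1-\|A\|}<2r$ for the inclusion $D_b\subset X$, and the same chaining of Lemma~\ref{lem:Cbp} (to get $\sigma_{\rho(b)}(y)=(-1)^p$) with Lemma~\ref{lem:changevar} for the identity $G_\mu|_{D_b}=g_b$. (The aside that $\|k\|\le 1$ is not what the inequality uses — only $1+\|k\|\ge 1$ is needed, which is automatic — but this does not affect correctness.)
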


\begin{proof}
First, we show the inclusion $D_b\subset X$. Recall the definition of $D_b= h_b^{-1}([0,1)^d)$. Given any $y\in D_b$ we have $y= x- (I-A)^{-1}b$ for some $x\in[0,1)^d$. This gives $\|y\|\leq 1+\frac{\|b\|}{1-\|A\|}\leq \frac{2-\|A\|}{1-\|A\|}$. But, a simple computation shows that 
$$
\frac{2-\|A\|}{1-\|A\|}< 2\frac{(1+\|A\|)(1+\|k\|)}{(1-\|A\|)^2},\quad \forall\, k\in \Zz^d.
$$
Thus, $\|y\|<2r$ and $D_b\subset X$. 

Now, we show that $G_\mu$ is an extension of $g_b$. Let $y\in D_b$. Since $\{C_b^p\}_{p\in\{0,1\}^d}$ is a partition of $D_b$, there must exist $p\in\{0,1\}^d$ such that $y\in C_b^p$. for some $p\in\{0,1\}^d$. Then, by Lemma~\ref{lem:Cbp}, $\sigma_{\rho(b)}(y)=(-1)^p$, which gives 
$$
G_{\rho(b)}(y)=\varphi_{\sigma_{\rho(b)}(y)}(y)= \varphi_{(-1)^p}(y) = Ay-(\chi(b) + p)= g_b(y),
$$
where the last equality follows from Lemma~\ref{lem:changevar}.
\end{proof}

Using this lemma, for any $b\in U_k$, we have $k=\chi(b)$ and $g_b$ equals the restriction of $G_{\rho(b)}$ on $D_b$. But, by Theorem~\ref{thm:Gmu2}, there is a set $W\subset \Rr^d$ of full Lebesgue measure such that $G_\mu$ is asymptotically periodic for every $\mu\in W$. Therefore, since $(\rho(b))_{{j}}=\frac{(k-(I-A)^{-1}b)_{{j}} + 1}{\|A^{{(j)}}\|}$ for $b\in U_k$, we see that $\rho|_{U_k}$ is an injective affine transformation. Thus, $(\rho|_{U_k})^{-1}(W)$ has full Lebesgue measure in $U_k$ and we have proven Theorem~\ref{thm1}.\qed

\subsection{Proof of Theorem~\ref{theoremtwo}} Let $\theta$ be an irrational number and $a>0$ be so small that the matrix $$A:=a\begin{bmatrix} \cos2\pi\theta & -\sin2\pi\theta\\ \sin2\pi\theta & \phantom{-}\cos2\pi\theta
\end{bmatrix}$$ 
satisfies $A\in\mathrm{GL}_2$ and $\|A\|<1$. By Theorem \ref{thm1},
there exists a full Lebesgue set $U\subset\mathbb{R}^2$ such that $f$, or more precisely, the map $f_b$ defined by \eqref{mapf2}, is asymptotically periodic. This implies that $f_b$ has at least one regular periodic orbit, that is, a periodic orbit that passes through no discontinuity of $f_b$. In particular, there exist an integer $k\ge 1$ and a $k$-periodic point $x^*\in (0,1)^2$ such that $f_b^k$ is smooth around $x^*$ and $f_b^k(x^*)=x^*$. Suppose that there exist a $C^1$ diffeomorphsim $h: [0,1)^2\to [0,1)^2$ and one-dimensional contracted rotations $g_1,g_2:[0,1)\to [0,1)$ such that $$\left(h\circ f_b\circ h^{-1}\right)(y_1,y_2)= (g_1\times g_2)(y_1,y_2)= \left(g_1(y_1),g_2(y_2)\right) \quad\textrm{for all}\quad  (y_1,y_2)\in [0,1)^2.$$
Let $(y_1^*,y_2^*)=y^*:=h(x^*)$. Then $g_1\times g_2$ is smooth around $y^*$ and $(g_1\times g_2)^k(y^*)=y^*$. 
We conclude that
\begin{equation}\label{90}
\left(h\circ f_b^k\circ h^{-1}\right)(y)= (g_1^k\times g_2^k)(y)\quad\textrm{for all}\quad  y\in [0,1)^2.
\end{equation}
Moreover, $h\circ f_b^k\circ h^{-1}$ and $g_1^k\times g_2^k$ are both $C^1$ around $y^*$. Derivating \eqref{90} at $y=y^*$ yields 
$$
Dh (x^*) Df_b^k(x^*)Dh^{-1}(y^*) = D(g_1^k\times g_2^k)(y^*),
$$ that is,
$$Dh (x^*) Df_b^k(x^*)\left[Dh(x^*)\right]^{-1} = D(g_1^k\times g_2^k)(y^*)=\begin{bmatrix} Dg_1^k(y_1^*)& 0 \\ 0 & Dg_2^k(y_2^*)\end{bmatrix}.$$ Therefore, $Df_b^k(x^*)$ is similar to a diagonal matrix implying that $Df_b^k(x^*)$ has only real eigenvalues. This is a contradiction because $Df_b^k(x^*)=A^k$ and $A^n$ has no real eigenvalue for any $n\geq1$.

\appendix
\section{Bifurcations of the rotation number}\label{appendix figure}

\begin{figure}[htbp]
    \centering
    \includegraphics[width=0.8\textwidth]{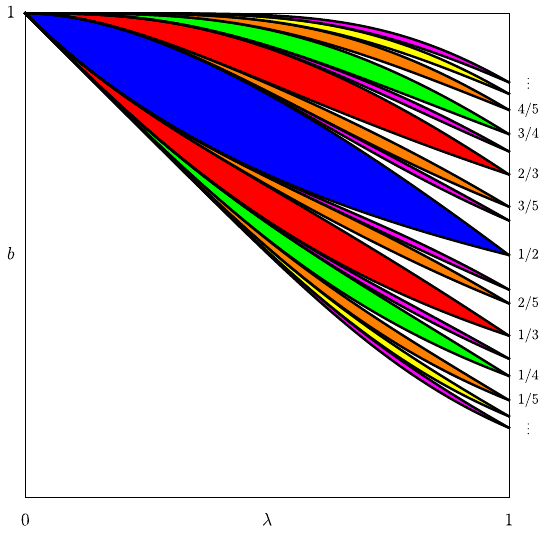}
    \caption{The relation between the parameteres $\lambda$, $b$ and the rotation number of $f_{\lambda,b}$.}
    \label{fig2}
\end{figure}

%


\bibliographystyle{amsplain}
\bibliography{Bibfile.bib}

\end{document}